\newtheorem{theorem}{Theorem}[section]
\newtheorem{lemma}[theorem]{Lemma}
\theoremstyle{definition}
\numberwithin{equation}{section}
\title[abbreviated title ]{Iterative algorithm with structured diagonal Hessian approximation for solving nonlinear least squares problems}
\author[A.M. Awwal]{Aliyu Muhammed Awwal$^{\rm 1,3}$}
\author[P. Kumam]{Poom Kumam$^{\rm 1,2*}$}
\author[H. Mohammad]{Hassan Mohammad$^{\rm 4}$}
\address{$^{\rm 1}$KMUTTFixed Point Research Laboratory, Department of Mathematics, Room SCL 802 Fixed Point Laboratory, Science Laboratory Building, Faculty of Science, King Mongkut's University of Technology Thonburi (KMUTT), 126 Pracha-Uthit Road, Bang Mod, Thrung Khru, Bangkok 10140, Thailand}
\address{$^{\rm 2}$KMUTT-Fixed Point Theory and Applications Research Group, Theoretical and Computational Science Center (TaCS), Science Laboratory Building, Faculty of Science, King Mongkut's University of Technology Thonburi (KMUTT), 126 Pracha-Uthit Road, Bang Mod, Thrung Khru, Bangkok 10140, Thailand}
\address{$^{\rm 3}$Department of Mathematics, Faculty of Science, Gombe State University, Gombe, Nigeria}
\address{$^{\rm 4}$Department of Mathematical Sciences, Faculty of Physical Sciences, Bayero University, Kano.  Kano, Nigeria}
\email[A.M. Awwal]{{\tt aliyumagsu@gmail.com}}
\email[P. Kumam]{\tt poom.kumam@mail.kmutt.ac.th}
\email[H. Mohammad]{{\tt hmuhd.mth@buk.edu.ng}}
\keywords{Nonlinear least-squares problems; Large-scale problems; Jacobian-free strategy; Global convergence}
\subjclass[2010]{90C30; 65K05; 49M37}
\begin{document}
\begin{abstract}
Nonlinear least squares problems are special class of unconstrained optimization problems in which their gradient and Hessian have special structures. In this paper, we exploit these structures and proposed a matrix free algorithm with diagonal Hessian approximation for solving nonlinear least squares problems. We devise appropriate safeguarding strategies to ensure the Hessian matrix is positive definite throughout the iteration process. The proposed algorithm generates descent direction and is globally convergent. Preliminary numerical experiments shows that the proposed method is competitive with a recent developed similar methods.
\end{abstract}
\maketitle
\section{Introduction}
\label{intro}
\indent In this paper, we consider nonlinear least squares problems, the special class of unconstrained optimization problems, of the form
\begin{equation}\label{prob1}
\min_{x\in \mathbb{R}^n}f(x),~~~f(x)=\frac{1}{2}\|F(x)\|^2,
\end{equation}
where $F(x)=(F_1(x),\cdots, F_m(x))^T$ and each residual $F_i : \mathbb{R}^n \to \mathbb{R},$ $i=1,\cdots, m$ (usually $m\geq n$), is twice continuously differentiable function.  Let $J(x) \in \mathbb{R}^{m\times n}$ denotes Jacobian of the residual function $F(x)$ and $g(x),$ denote the gradient of the objective function $f$, $\nabla f(x_k)$, and $H(x)$ denote the Hessian of the objective function $\nabla^2 f(x)$. The gradient and Hessian of problem (\ref{prob1}) have special structures and are respectively given by
\begin{equation}\label{gradient}
g(x):= \sum_{i=1}^m F_i(x)\nabla F_i(x) = J(x)^TF(x)
\end{equation}
\begin{equation}\label{hessian}
H(x):= \sum_{i=1}^m \nabla F_i(x)\nabla F_i(x)^T + \sum_{i=1}^m F_i(x)\nabla ^2 F_i(x)=J(x)^TJ(x)+C(x),
\end{equation}
where $F_i$ is the $i$th component of $F$, $\nabla ^2 F_i(x)$ is its Hessian, and $C(x)$ is a square matrix representing the second term of  the Hessian.

Nonlinear least squares problems have been studied extensively, and many iterative algorithms for solving them have been proposed. These generally fall into two categories, namely, general unconstrained optimization algorithms that includes Newton's method and quasi-Newton methods; and special methods, that take the special structure of the problem into account, which constitute Gauss-Newton method, Levenberg-Marquardt method and Structured quasi-Newton methods (see \cite{xu1990hybrid,zhang2012local,zhang2010derivative,yuan2009subspace,fletcher1987hybrid}). For a brief survey of methods for addressing nonlinear least squares problems, interested reader may refer to the recent articles by Mohammad et al. \cite{mohammad2019survey} and Yuan \cite{yuan2011recent}. 

The study of efficient algorithm for nonlinear least squares (NLS) problems is important because of its numerous areas of applications such as data fitting, optimal control, parameter estimation, experimental design, data assimilation, and imaging problems (see \cite{golub2003separable,kim2007interior,li2012maximum,cornelio2011regularized,barz2015nonlinear,tang2011regularization}). For instance, it is often common to measure the discrepancy between a proposed parametrized model and the observed behavior of a given system. To select values for the parameters that best match the model to the data, it is usual to minimize the sum of the squares of the residuals $F_i^{'s}$. 
The special structure of problem (\ref{prob1}) can always be explored to devise efficient algorithms for obtaining its solution. For example Kobayashi et al. \cite{kobayashi2010nonlinear} exploits the structure of the nonlinear least squares by introducing a class of matrix-free structured methods that falls into the category of conjugate gradient algorithms with modified secant condition for solving nonlinear least squares problems. Motivated by their idea, Dehghani and Mahdavi-Amiri \cite{dehghani2018scaled} proposed a modified secant relation specifically to get more information of the Hessian of the nonlinear least squares objective function. Furthermore, they proposed another class of conjugate gradient methods for addressing nonlinear least squares problems. In another attempt, but different approach, Mohammad and Waziri \cite{mohammad2019structured} proposed two structured Barzilai-Borwein step sizes for solving nonlinear least squares. 

Recently, Mohammad and Sandra \cite{mohammad2018structured} proposed a diagonal Hessian approximation method for nonlinear least squares problems in which the diagonal approximation of the Hessian of the objective function  is obtained using a structured secant condition that  have some information of the exact Hessian. However, as a final remarks, the authors comment on the need for further research that investigate a better approximation of the Hessian matrix that involved its special structure. 

We feel that approximating the first and second terms of the Hessian matrix (\ref{hessian}) will lead to substantial lost of information about the Hessian. In this paper, we proposed a diagonal Hessian with better approximation by exploiting the special structure of problem (\ref{prob1}) and obtained a matrix-free algorithm. The main difference between our method and the method in \cite{mohammad2018structured}, is that, in building our diagonal matrix, we take the whole information of the first term of (\ref{hessian}) and approximate its second term. By this, our diagonal matrix contains more information than the one proposed in \cite{mohammad2018structured}. Our proposed method generates descent directions and is globally convergent.

The remaining of this paper is organized as follows. In Section 2, we present the proposed method and its algorithm. The convergence analysis is discussed in Section 3 and in Section 4, we give numerical experiments. Throughout this article, we use the following notations for the objective function$ f(x_k)=f_k,\text{ for the residual }~ F(x_k)=F_k,\text{ for any matrix}~ A(x_k)=A_k,~\text{ and }\|\cdot\|$ for the Euclidean norm of vectors and the induced 2-norm of matrices.
\section{Proposed method}
\label{sec:1}
An important concept of a structured quasi-Newton method for nonlinear least squares is the structure principle \cite{dennis1989convergence}. Now, we provide the necessary elements to develop our proposed diagonal Hessian approximation, taking the special structures of the gradient and Hessian of the problem (\ref{prob1}) into account.\\
Consider the second term of the Hessian matrix (\ref{hessian}). Suppose that at certain iteration $k-1,~~k\ge 1$ the second term of the Hessian matrix (\ref{hessian}) is 
\begin{equation}
C(x_{k-1})=\sum_{i=1}^m F_i(x_{k-1})\nabla ^2 F_i(x_{k-1}),
\end{equation}
so that the updating matrix $C(x_k)$ which satisfies the secant equation $$C(x_k)s_{k-1}=y_{k-1},$$ can be obtained as follows.\\
The Taylor's expansion of $\nabla F_i(x_{k-1})$ can be written as
\begin{equation}\label{taylor1}
\nabla F_i(x_{k-1})=\nabla F_i(x_k) + \nabla^2 F_i(x_k)^T(x_{k-1}-x_k) + \textit{\textbf{o}}(\|x_{k-1}-x_k\|),
\end{equation}
where $\textit{\textbf{o}}:\mathbb{R}_+\to \mathbb{R}^n$ such that for each $i=1,\cdots,n,$ $\lim\limits_{\xi\to 0}\frac{o^i(\xi)}{\xi}=0.$\\
Let $s_{k-1}=x_k-x_{k-1}.$ Multiplying (\ref{taylor1}) by $F_i(x_k)$ and rearranging, we have
\begin{equation}\label{taylor2}
F_i(x_k)\nabla^2 F_i(x_k)^Ts_{k-1}=F_i(x_k)\nabla F_i(x_k) - F_i(x_k)\nabla F_i(x_{k-1}) + F_i(x_k)\textit{\textbf{o}}(\|s_{k-1}\|),
\end{equation}
By summing both sides of (\ref{taylor2}) for $i=1,\cdots,m$ we obtain
\begin{equation}\label{Csy}
C(x_k)s_{k-1}=J(x_k)^TF(x_k)-J(x_{k-1})^TF(x_k)+(F_k^T\textbf{1}_m)\textit{\textbf{o}}(\|s_{k-1}\|).
\end{equation}
Putting (\ref{Csy}) into (\ref{hessian}) implies
\begin{equation}\label{Hsy}
H_ks_{k-1}=J_k^TJ_ks_{k-1}+(J_k-J_{k-1})^TF(x_k)+(F_k^T\textbf{1}_m)\textit{\textbf{o}}(\|s_{k-1}\|).
\end{equation}

Let $D_k\approx H_k$ such that $D_k$ is a diagonal matrix approximately satisfying the secant equation
\begin{equation}\label{Csyapprx}
D_ks_{k-1}\approx y_{k-1},
\end{equation}
where $y_{k-1}=J_k^TJ_ks_{k-1}+(J_k-J_{k-1})^TF(x_k).$
For convenience, we denote the first and second terms of $y_{k-1}$ as   
\begin{equation}\label{yapprx}
\hat{y}_{k-1}=J_k^TJ_ks_{k-1} ~~ \text{and} ~~ \overline{y}_{k-1}=(J_k-J_{k-1})^TF(x_k).
\end{equation}
The following Lemma comes from \cite{mohammad2018structured} and will be useful in defining the entries of the diagonal matrix $D_k$ in view of the secant equation (\ref{Csyapprx}).
\begin{lemma}\label{lem1}
Let Let $D=\text{diag}(d)$ be a diagonal matrix in $\mathbb{R}^{n\times n},$ and let $c$ and $s$ be vectors in $\mathbb{R}^n.$ Then, the solution of the constrained linear least-squares problem with simple bounds:
\begin{equation}
\min_{d\in \mathbb{R}^n}\frac{1}{2} \|diag(d)s-c\|^2
\end{equation}
$$ subject~~ to ~-d\leq 0$$
is given by 
\begin{equation}\label{dcsi}
 {d^i}=
\begin{cases}
\frac{c^i}{s^i}, & \text{if} ~~ \frac{c^i}{s^i}>0,\\
0, & \text{if} ~~ \frac{c^i}{s^i}\leq 0, ~ \text{or} ~ s^i=0,
\end{cases}
i=1,\cdots,n.
\end{equation}
\end{lemma}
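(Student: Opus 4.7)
The plan is to exploit the separability of the objective across coordinates and then reduce to a one-dimensional minimization for each index, handled by straightforward case analysis.

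First, I would observe that since $D = \text{diag}(d)$, the vector $\text{diag}(d)s - c$ has $i$-th entry $d^i s^i - c^i$, so the objective decomposes as
\begin{equation*}
\tfrac{1}{2}\|\text{diag}(d)s - c\|^2 = \tfrac{1}{2}\sum_{i=1}^n (d^i s^i - c^i)^2,
\end{equation*}
and the constraint $-d \leq 0$ likewise decouples into $d^i \geq 0$ for each $i$. Thus the problem splits into $n$ independent scalar problems
\begin{equation*}
\min_{d^i \geq 0} \varphi_i(d^i), \qquad \varphi_i(t) := \tfrac{1}{2}(t s^i - c^i)^2,
\end{equation*}
and it suffices to solve each one separately.

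Next I would do a case split on $s^i$. If $s^i = 0$, then $\varphi_i(t) = \tfrac{1}{2}(c^i)^2$ is constant, so any $d^i \geq 0$ is optimal; in particular $d^i = 0$ works, matching the stated formula. If $s^i \neq 0$, then $\varphi_i$ is a strictly convex quadratic in $t$ with unique unconstrained minimizer $t^\star = c^i/s^i$ (obtained by setting $\varphi_i'(t) = s^i(ts^i - c^i) = 0$). If $c^i/s^i > 0$, this minimizer satisfies the constraint $t \geq 0$ and is therefore also the constrained minimizer, giving $d^i = c^i/s^i$. If $c^i/s^i \leq 0$, then the unconstrained minimizer lies outside the feasible region $[0,\infty)$; since $\varphi_i$ is convex and its minimum over $\mathbb{R}$ is attained at a non-positive point, $\varphi_i$ is nondecreasing on $[0,\infty)$, so the constrained minimum is attained at the boundary $d^i = 0$.

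There is no real obstacle here; the argument is essentially a KKT calculation made transparent by the diagonal structure, and the only subtlety to write carefully is the degenerate sub-case $s^i = 0$, where uniqueness fails and one must specify that the convention $d^i = 0$ is a valid (indeed, the canonical non-negative) choice. Combining all three sub-cases yields the piecewise formula \eqref{dcsi}.
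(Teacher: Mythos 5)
Your proof is correct: the coordinate-wise separation of the objective and constraint, followed by the elementary analysis of each scalar convex quadratic on $[0,\infty)$ (including the degenerate case $s^i=0$), fully establishes the formula \eqref{dcsi}. The paper itself offers no proof of this lemma, importing it directly from \cite{mohammad2018structured}, so your self-contained argument is the natural one and there is nothing to reconcile.
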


Let the diagonal matrix $D_k$ be decompose into two diagonal matrices i.e. $D_k=diag(a_k)+diag(b_k)$, where $a_k, ~b_k$ are vectors representing the first and second terms of $y_{k-1}$ respectively. For the diagonal matrix to be positive definite, all the diagonal entries $a_k^i$ and $b_k^i$ for $i=1,\cdots, n,$ must be strictly positive. Now, applying Lemma \ref{lem1} to the secant equation (\ref{Csyapprx}) for $i=1,\cdots,n,$ Equation (\ref{dcsi}) becomes 
\begin{equation}\label{aysi}
 {a_k^i}=
\begin{cases}
\frac{\hat{y}_{k-1}^i}{s_{k-1}^i}, & \text{if} ~~ \frac{\hat{y}_{k-1}^i}{s_{k-1}^i}>0,\\
0, & \text{if} ~~ \frac{\hat{y}_{k-1}^i}{s_{k-1}^i}\leq 0, ~ \text{or} ~ s^i=0,
\end{cases}
i=1,\cdots,n.
\end{equation}
and
\begin{equation}\label{bysi}
 {b_k^i}=
\begin{cases}
\frac{\overline{y}_{k-1}^i}{s_{k-1}^i}, & \text{if} ~~ \frac{\overline{y}_{k-1}^i}{s_{k-1}^i}>0,\\
0, & \text{if} ~~ \frac{\overline{y}_{k-1}^i}{s_{k-1}^i}\leq 0, ~ \text{or} ~ s^i=0,
\end{cases}
i=1,\cdots,n.
\end{equation}

From the above Lemma \ref{lem1}, the diagonal matrix has nonnegative diagonal entries which means it is positive semidefinite matrix. In order to ensure the diagonal matrix is positive definite, in the next subsection we provide a safeguard strategy similar to the one given in \cite{mohammad2018structured}, that guarantee each diagonal entries $a_k^i$ and $b_k^i$ for $i=1,\cdots n,$ is strictly positive.
\subsection{Safeguarding strategy}
We consider the situation in which $\hat{y}_{k-1}^i,s_{k-1}^i$ and $\overline{y}_{k-1}^i,s_{k-1}^i$ have different signs with $s_{k-1}^i\neq 0.$ Let $\gamma\in(0,1)$ be a shrinking parameter and $\rho>0$ be a tolerance for ensuring strictly values.
\begin{itemize}
\item[\textbf{Case (a)}]
\begin{itemize}
\item[] Suppose $s_{k-1}^i>0$.\\
\item [\textbf{(ai)}]  If $\hat{y}_{k-1}^i\leq 0,$ i.e., $(J_k^TJ_ks_{k-1})^i\leq 0.$ Then redefine $\hat{y}_{k-1}^i$ as
\begin{equation}\label{s1}
\hat{y}_{k-1}^i=\gamma\max\{|(J_k^TJ_ks_{k-1})^i|,\rho\}
\end{equation}
so that $a_k^i=\frac{\hat{y}_{k-1}^i}{s_{k-1}^i}>0.$
\item[\textbf{(aii)}] If $\overline{y}_{k-1}^i\leq 0,$ i.e., $(J_k^TF_k)^i\leq (J_{k-1}^TF_k)^i.$ Then redefine $\overline{y}_{k-1}^i$ as
\begin{equation}\label{s2}
\overline{y}_{k-1}^i=\gamma\left\{\max\left\{\max|(J_k^TF_k)^i|,|(J_{k-1}^TF_k)^i|\right\},\rho\right\}
\end{equation}
so that $b_k^i=\frac{\overline{y}_{k-1}^i}{s_{k-1}^i}>0.$
\end{itemize}
\item[\textbf{Case(b)}]
\begin{itemize}
\item[] Suppose $s_{k-1}^i<0$.
\item [\textbf{(bi)}] If $\hat{y}_{k-1}^i\geq 0,$ i.e., $(J_k^TJ_ks_{k-1})^i\geq 0.$ Then redefine $\hat{y}_{k-1}^i$ as
\begin{equation}\label{s3}
\hat{y}_{k-1}^i=-\gamma\max\{(J_k^TJ_ks_{k-1})^i,\rho\}
\end{equation}
so that $a_k^i=\frac{\hat{y}_{k-1}^i}{s_{k-1}^i}>0.$
\item[\textbf{(bii)}] If $\overline{y}_{k-1}^i\geq 0,$ i.e., $(J_k^TF_k)^i\geq (J_{k-1}^TF_k)^i.$ Then redefine $\overline{y}_{k-1}^i$ as
\begin{equation}\label{s4}
\overline{y}_{k-1}^i=-\gamma\left\{\max\left\{\max|(J_k^TF_k)^i|,|(J_{k-1}^TF_k)^i|\right\},\rho\right\}
\end{equation}
so that $b_k^i=\frac{\overline{y}_{k-1}^i}{s_{k-1}^i}>0.$
\end{itemize}
\end{itemize}
In a situation where $s_{k-1}^i=0,$ then $a_{k-1}^i$ and $b_{k-1}^i$ will assume any suitable nonnegative safeguarding value.
\subsection{Algorithm}
In this subsection, we present the proposed algorithm. Let $d_k$ and $g_k$ denote the search direction and the gradient of the objective function (\ref{prob1}) respectively. The search direction $d_k$, is obtained by solving the linear systems $$H_kd_k=-g_k,$$ where 
\begin{equation}
H_k=
\begin{cases}
I, & \text{if}~~ k=0\\
diag(a_k+b_k), & \text{if}~~ k\geq 1
\end{cases}
\end{equation}
is a diagonal matrix whose entries are computed by
\begin{equation}
a_k^i + b_k^i =
\begin{cases}
\frac{\hat{y}_{k-1}^i+\overline{y}_{k-1}^i}{s_{k-1}^i} & \text{if}~~ s_{k-1}^i \neq 0\\
1 & \text{if} ~~ s_{k-1}^i=0,
\end{cases} 
i=1,\cdots,n.
\end{equation}
The vectors $\hat{y}_{k-1}$ and $\overline{y}_{k-1}$ are defined by (\ref{yapprx}) with some of their components possibly redefined by (\ref{s1})$-$(\ref{s4}). Furthermore, we safeguard the diagonal entries of the diagonal Hessian $H_k$ from assuming extremely small and extremely large values by means of projecting them into a given scalar interval $[l,u],$ such that $0<l\leq 1 \leq u<<+\infty.$ Hence, the $i$th diagonal entry of our Hessian matrix $H_k$ in which $y_{k-1}^i=\hat{y}_{k-1}^i+\overline{y}_{k-1}^i$ for each $i=1,2,...,n$ is given by 
\begin{equation}\label{dhessian}
 {h_k^i}=
\begin{cases}
\frac{y_{k-1}^i}{s_{k-1}^i}, & \text{if} ~~ l\leq \frac{y_{k-1}^i}{s_{k-1}^i}\leq u,\\
l, & \text{if}~~ \frac{y_{k-1}^i}{s_{k-1}^i}< l,\\
u, & \text{if}~~ \frac{y_{k-1}^i}{s_{k-1}^i}> u,\\
1, & \text{if} ~~ s_{k-1}^i=0
\end{cases}
\end{equation}
Given a starting point $x_0,$ we compute the next iterate via
\begin{equation}\label{xkdk}
x_{k+1}=x_k +\alpha_kd_k, ~~ k=0,1,2,\cdots.
\end{equation}
Here the search direction is given by
\begin{equation}\label{direction}
d_k=H_k^{-1}g_k,
\end{equation}
$H_k$ is a structured diagonal Hessian with diagonal entries $h_k^i$, the gradient $g_k=J_k^TF_k$ and $J_k$ and $F_k$ are the Jacobian matrix and function evaluation at $x_k$ respectively.\\
We adopt the non-monotone line search proposed by Zhang and Hager \cite{zhang2004nonmonotone} to determine the step length $\alpha_k$. Let the search direction $d_k$ defined by (\ref{direction}) be a descent direction, then the step length $\alpha_k>0$ in (\ref{xkdk}) should satisfy the following non-monotone Armijo-type line search technique
\begin{equation}\label{linesearch}
f(x_k+\alpha_kd_k)\leq P_k + \theta \alpha_k g_k^T d_k,
\end{equation}
where 
\begin{equation}\label{PQ}
\begin{cases}
P_0=f(x_0)\\
P_{k+1}=\frac{\eta_kQ_kP_k+f(x_{k+1})}{Q_{k+1}}\\
Q_0=1,\\
Q_{k+1}=\eta_kQ_k+1,
\end{cases}
\end{equation}
and $\theta\in(0,1),$ $\eta_k\in[0,1].$\\
Next we give the following remarks:
\begin{itemize}
\item[\textbf{Remark A}]
\item[\textbf{\textit{(i)}}] Note that the $P_{k+1}$ in the above line search technique is a convex combination of $P_k$ and $f(x_{k+1}).$ Since $P_0=f(x_0)$, it follows that the sequence $\{P_k\}$ is a convex combination of the function values $f(x_i),$ for $i=0,1,2,\cdots,k.$
\item[\textbf{\textit{(ii)}}] The parameter $\eta_k$ controls the degree of monotonicity. If for each $k,$ $\eta_k=0,$ then the line search (\ref{linesearch}) is the usual monotone (Armijo-type); otherwise, it is non-monotone.
\item[\textbf{\textit{(iii)}}] If for each $k,$ $\eta_k=1,$ then $P_k=\psi_k$ where
\begin{equation}\label{etaeqaul1}
\psi_k= \frac{1}{k+1}\sum_{i=1}^kf(x_i)
\end{equation}
\end{itemize}
We now formally state the steps of our proposed iterative algorithm with structured diagonal Hessian.\\
\begin{itemize}
\item[]\textbf{Algorithm 1: Algorithm with Structured Diagonal Hessian (ASDH)}
\item[\textbf{Step 0.}]  Given $x_0\in \mathbb{R}^n$, $\gamma, \theta \in(0,1)$, $0\leq\eta_{\min} \leq \eta_{\max} \leq 1,$ $0<l \leq 1 \leq u,$ $\rho,\epsilon >0$, and $k_{\max} \in \mathbb{N}.$\\[5pt]
\item[\textbf{Step 1.}] Set $k=0,$ $H_k=I,$ $Q_k=1$. Compute $F_k$ and $g_k,$ and set $P_k=f_k.$\\[5pt]
\item[\textbf{Step 2.}] If $\|g_k\|\leq\epsilon$ and $k\geq k_{\max},$ stop. Else compute $d_k$ using (\ref{direction}).\\[5pt]
\item[\textbf{Step 3.}] Perform nonmonotone line search\\[2pt]
\begin{itemize}
\item[\textbf{Step 3.1.}] Set $ \alpha = 1,$\\[2.5pt]
\item[\textbf{Step 3.2.}] if the following inequality
$$f(x_k+\alpha d_k)\leq C_k +\alpha\theta g_k^Td_k.$$
holds, then proceed to \textbf{Step 4.}\\ Else, set $\alpha_k=\alpha/2$ and repeat \textbf{Step 3.2}\\[5pt]
\end{itemize}
\item[\textbf{Step 4.}] Set $\alpha_k=\alpha$ and compute the next iterate using (\ref{xkdk}).\\[5pt]
\item[\textbf{Step 5.}] Set $s_k=\alpha_kd_k$ and compute $\hat{y}_k$ and $\overline{y}_k$ using (\ref{yapprx}).\\[5pt]
\item[\textbf{Step 6.}] Safeguard $\hat{y}_k$ and $\overline{y}_k$ using (\ref{s1})$-$(\ref{s4}) in case any of them has different sign with $s_k^i$.\\[5pt]
\item[\textbf{Step 7.}] Update the diagonal Hessian $H_{k+1}$ using (\ref{dhessian}).\\[5pt]
\item[\textbf{Step 8.}] Choose $\eta_k\in[\eta_{\min},\eta_{\max}]$ and compute $Q_{k+1}$ and $P_{k+1}$ using (\ref{PQ}).\\[5pt]
\item[\textbf{Step 9.}] Set $k=k+1$ and go to \textbf{step 2.}
\end{itemize}
\begin{itemize}
\item[\textbf{Remark B}]
\item[\textbf{\textit{(i)}}] Though the vectors $g_k,$ $\hat{y}_k$ and $\overline{y}_k$  are in the form of matrix-vector products, these products were obtained by writing a MATLAB code that computes them directly without forming or storing the Jacobian matrix.
\item[\textbf{\textit{(ii)}}] Since the Hessian $H_k$ is a diagonal matrix, its inverse $H_k^{-1}$ is obtained by taking the reciprocal of each $h_i$ $(i=1,2,\cdots,n)$ using Equation (\ref{dhessian}) taking into account our safeguarding rule. Therefore, the product $H_k^{-1}g_k$ is simply component-wise vector multiplications.
\end{itemize}
From the discussions in \textit{Remark B} above, we can see that our proposed method is matrix-free and therefore suitable for large-scale problems. 

\section{Convergence Analysis}
In this section, we discuss the global convergence of our proposed method. We begin by stating the following assumptions which will be useful in our analysis.\\
\begin{itemize}
\item[\bf{A1.}] The level set $\mathcal{D}=\{x\in \mathbb{R}^n ~|~ f(x)\leq f(x_0)\}$ is bounded, i.e. there exists a positive constant $\omega$ such that $\|x\|\leq \omega$ for all $x\in\mathcal{D}.$
\item[\bf{A2.}] There exist constants $L_1$ and $L_2$ such that for all $x,y\in \mathcal{D},$ we have
\begin{equation}\label{JL}
\|J(x)-J(y)\|\leq L_1\|x-y\|
\end{equation}
and
\begin{equation}\label{FL}
\|F(x)-F(y)\|\leq L_2 \|x-y\|
\end{equation}
From (\ref{JL}) and (\ref{FL}), we can obtain the followings
$$\|g(x)-g(y)\|\leq l\|x-y\|,$$
$$\|F(x)\|\leq \omega_1, ~~ \|J(x)\|\leq \omega_2, ~~ \|g(x)\|\leq \gamma_3,$$
where $l,$ $\omega_1,$ $\omega_2$ and $\omega_3$ are positive constants.
\end{itemize}
\begin{lemma}\label{lem2}
Let the sequence of search directions $\{d_k\}$ be generated by the ASDH algorithm, then there exist $m_1, m_2$ positive constants such that for all $k=0,1,2,\cdots,$ the following relations hold
\begin{equation}\label{descentdirection}
g_k^Td_k \leq -m_1\|g_k\|^2,
\end{equation}
and
\begin{equation}\label{directionbdd}
\|d_k\| \leq m_2\|g_k\|.
\end{equation}
\end{lemma}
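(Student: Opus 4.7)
The plan is to read the structure of $H_k$ off the definition in (\ref{dhessian}). The safeguarding rule forces every diagonal entry $h_k^i$ to lie in the interval $[l,u]$ with $0<l\leq 1\leq u$ (the case $k=0$ is handled by $H_0=I$, whose eigenvalues are $1\in[l,u]$). Since $H_k$ is diagonal, its eigenvalues are exactly the $h_k^i$, so $H_k$ is symmetric positive definite and satisfies the operator-norm estimates $\|H_k\|\leq u$ and $\|H_k^{-1}\|\leq 1/l$ for all $k\geq 0$. Interpreting (\ref{direction}) with the descent sign convention $d_k=-H_k^{-1}g_k$ (which is what the statement of the lemma and the algorithm's line-search require), both bounds reduce to spectral estimates on this diagonal matrix.

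For the descent inequality, I would write
\begin{equation*}
g_k^T d_k \;=\; -\,g_k^T H_k^{-1} g_k \;=\; -\sum_{i=1}^{n}\frac{(g_k^i)^2}{h_k^i} \;\leq\; -\frac{1}{u}\|g_k\|^2,
\end{equation*}
using $h_k^i\leq u$ termwise. This yields (\ref{descentdirection}) with $m_1=1/u$. For the norm bound, I would estimate
\begin{equation*}
\|d_k\| \;=\; \|H_k^{-1} g_k\| \;\leq\; \|H_k^{-1}\|\,\|g_k\| \;=\; \Bigl(\max_{1\leq i\leq n}\tfrac{1}{h_k^i}\Bigr)\|g_k\| \;\leq\; \frac{1}{l}\|g_k\|,
\end{equation*}
using $h_k^i\geq l$, which gives (\ref{directionbdd}) with $m_2=1/l$. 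Both constants are independent of $k$ because the bounds $l$ and $u$ in (\ref{dhessian}) are fixed algorithmic parameters.

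There is no real obstacle here; the argument is essentially bookkeeping once one recognizes that the projection onto $[l,u]$ in (\ref{dhessian}) uniformly bounds the spectrum of $H_k$. The only mild subtlety is that one must confirm the safeguarding step (\textbf{Step 6}) combined with the clipping in (\ref{dhessian}) produces strictly positive diagonal entries at every iteration, including the corner cases $s_{k-1}^i=0$ and the sign-mismatch cases (a)--(b), where the formulas (\ref{s1})--(\ref{s4}) together with the fallback value $1$ from (\ref{dhessian}) do indeed deliver entries in $[l,u]$. Once this is noted, the proof is immediate and the two constants can be taken as $m_1=1/u$ and $m_2=1/l$.
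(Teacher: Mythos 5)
Your proof is correct and follows essentially the same route as the paper: both arguments reduce to the uniform spectral bounds $l\leq h_k^i\leq u$ guaranteed by (\ref{dhessian}) and yield the same constants $m_1=1/u$ and $m_2=1/l$. Your observation that (\ref{direction}) must be read with the sign convention $d_k=-H_k^{-1}g_k$ is also consistent with what the paper implicitly does in its own computation.
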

\begin{proof}
From the definition of $H_k^{-1},$ the diagonal entries defined by (\ref{dhessian}) is bounded for each $i$ and for all $k,$ i.e. $l\leq h_k^i \leq u.$\\ Now, by the Equation (\ref{direction}) we have
\begin{equation*}
\begin{split}
 g_k^Td_k & = -g_k^TH_k^{-1}g_k\\
 & = -\sum_{i=1}^n(g_k^i)^2/h_k^i\\
 & \leq - (1/u)\sum_{i=1}^n (g_k^i)^2\\
 & = - (1/u) \|g_k\|^2.
 \end{split}
\end{equation*}
If we let $m_1 = 1/u,$ then (\ref{descentdirection}) holds.\\
In a similar way, since the diagonal matrix is always symmetric, we get
\begin{equation*}
\begin{split}
 \|d_k\|^2 & = g_k^TH_k^{-2}g_k\\
 & = \sum_{i=1}^n(g_k^i/h_k^i)^2\\
 & \leq (1/l^2)\sum_{i=1}^n (g_k^i)^2\\
 & = (1/l^2) \|g_k\|^2.
 \end{split}
\end{equation*}
If we let $m_2 = 1/l,$ we obtain (\ref{directionbdd}) and the proof is complete.
\end{proof}
Equations (\ref{descentdirection}), (\ref{directionbdd}) and the Proposition 1 in \cite{mohammad2018structured} imply tha the ASDH Algorithm is well-defined. Next, we state the following result which comes from Lemma 1.1 in \cite{zhang2004nonmonotone}. 
\begin{lemma}\label{lem3}
The iterates generated by the ASDH Algorithm satisfy $f(x_k) \leq P_k \leq \psi_k,$ for all $k\geq 0,$ where $P_k$ and $\psi_k$ are defined by (\ref{PQ}) and (\ref{etaeqaul1}) respectively.
\end{lemma}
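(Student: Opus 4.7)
The plan is to prove both inequalities by induction on $k$, treating them as a joint inductive statement since, as I will show, the upper-bound step needs both the hypothesis $P_k \leq \psi_k$ and the just-established lower-bound conclusion $f(x_{k+1}) \leq P_k$. The base case $k=0$ is immediate: $P_0=f(x_0)$ by Step 1 of the algorithm, and (reading (\ref{etaeqaul1}) at $k=0$ as $\psi_0=f(x_0)$) the chain $f(x_0)\leq P_0\leq \psi_0$ is an equality.

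For the lower bound $f(x_k)\leq P_k$, assume it holds at step $k$. By Lemma \ref{lem2}, $g_k^Td_k\leq -m_1\|g_k\|^2\leq 0$, and the accepted step $\alpha_k>0$ satisfies the nonmonotone Armijo condition (\ref{linesearch}), giving $f(x_{k+1})\leq P_k+\theta\alpha_k g_k^Td_k\leq P_k$. Combined with the identity
\begin{equation*}
P_{k+1}=\frac{\eta_kQ_k}{Q_{k+1}}P_k+\frac{1}{Q_{k+1}}f(x_{k+1}),\qquad Q_{k+1}=\eta_kQ_k+1,
\end{equation*}
in which the coefficients are nonnegative and sum to one, this makes $P_{k+1}$ a convex combination of $P_k$ and $f(x_{k+1})$. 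Hence $f(x_{k+1})\leq P_{k+1}\leq P_k$, closing the first induction and establishing monotonicity of $\{P_k\}$ as a by-product.

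For the upper bound $P_k\leq \psi_k$, assume this and the lower bound at step $k$. I would rearrange the recurrences $Q_{k+1}P_{k+1}=\eta_kQ_kP_k+f(x_{k+1})$ and $(k+2)\psi_{k+1}=(k+1)\psi_k+f(x_{k+1})$ to reach the key algebraic identity
\begin{equation*}
(k+2)Q_{k+1}(\psi_{k+1}-P_{k+1})=(k+2)\eta_kQ_k(\psi_k-P_k)+\bigl((k+1)-\eta_kQ_k\bigr)\bigl(\psi_k-f(x_{k+1})\bigr).
\end{equation*}
The first term on the right is nonnegative by the induction hypothesis. For the second, a one-line induction using $Q_{k+1}=\eta_kQ_k+1$ and $\eta_k\leq 1$ gives $Q_k\leq k+1$, so $(k+1)-\eta_kQ_k\geq 0$; and $\psi_k-f(x_{k+1})=(\psi_k-P_k)+(P_k-f(x_{k+1}))\geq 0$ by the induction hypothesis together with the lower-bound step. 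Therefore $\psi_{k+1}\geq P_{k+1}$.

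The main obstacle is the upper bound: its inductive step does not come from simply chaining inequalities but hinges on the algebraic rearrangement above, which is what reconciles the two different convex-combination weight schemes (uniform versus $\eta_k$-tilted). Crucially, both halves of the inductive hypothesis must be carried simultaneously, since dropping either $P_k\leq \psi_k$ or $f(x_{k+1})\leq P_k$ leaves the second term on the right with indeterminate sign. This identity is essentially the content of Zhang and Hager's Lemma 1.1 in \cite{zhang2004nonmonotone}, to which the authors defer.
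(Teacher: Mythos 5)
Your proof is correct. Note, however, that the paper does not actually prove this lemma: it simply states it and defers to Lemma 1.1 of \cite{zhang2004nonmonotone}, so your argument is a self-contained reconstruction of the cited result rather than a parallel to anything in the text. I verified your key identity by expanding both sides with $Q_{k+1}=\eta_kQ_k+1$ and $(k+2)\psi_{k+1}=(k+1)\psi_k+f(x_{k+1})$; it holds, the bound $Q_k\le k+1$ is correct, and you are right that the induction must carry both halves, since the sign of $\bigl((k+1)-\eta_kQ_k\bigr)\bigl(\psi_k-f(x_{k+1})\bigr)$ needs both $P_k\le\psi_k$ and $f(x_{k+1})\le P_k$. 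For comparison, Zhang and Hager reach the same conclusion by a slightly slicker device: setting $D(t)=\frac{tP_k+f(x_{k+1})}{t+1}$, observing that $D'(t)=\frac{P_k-f(x_{k+1})}{(t+1)^2}\ge 0$ once the line search has given $f(x_{k+1})\le P_k$, and then chaining $P_{k+1}=D(\eta_kQ_k)\le D(k+1)\le\psi_{k+1}$ using $\eta_kQ_k\le k+1$ and the inductive hypothesis $P_k\le\psi_k$; your explicit algebraic identity is an equivalent, if more computational, route, and arguably more transparent about where each hypothesis is used. One further remark: you silently (and correctly) repaired the paper's definition (\ref{etaeqaul1}), whose sum should start at $i=0$ so that $\psi_k$ is the average of $f(x_0),\dots,f(x_k)$ and $\psi_0=f(x_0)$; as printed, $\psi_0$ would be an empty sum and the claim would already fail at $k=0$. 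Flagging that reading explicitly, as you did, is the right call.
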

\begin{lemma}\label{lem4}
The sequence of iterates $\{x_k\}$ generated by the ASDH Algorithm satisfy $f(x_k)\leq f(x_0),$ for each $k\geq 0.$
\end{lemma}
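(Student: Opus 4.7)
The plan is to combine the descent property established in Lemma~\ref{lem2}, the nonmonotone Armijo-type line search \eqref{linesearch}, and the convex-combination recursion \eqref{PQ} defining $P_k$, together with the bound $f(x_k)\le P_k$ supplied by Lemma~\ref{lem3}. The whole argument reduces to showing that the reference sequence $\{P_k\}$ is monotonically nonincreasing, and then bounding it above by its initial value $P_0=f(x_0)$.

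First I would observe that Lemma~\ref{lem2} gives $g_k^Td_k\le -m_1\|g_k\|^2\le 0$, so along the iterates produced by Step~4 of ASDH the acceptance condition \eqref{linesearch} forces
\[
f(x_{k+1})\;\le\;P_k+\theta\alpha_k g_k^Td_k\;\le\;P_k,
\]
because $\alpha_k>0$ and $\theta\in(0,1)$. This is the key inequality that couples the new function value to the reference value $P_k$.

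Next I would prove by induction that $P_{k+1}\le P_k$ for every $k\ge 0$. From \eqref{PQ} we have
\[
P_{k+1}\;=\;\frac{\eta_kQ_kP_k+f(x_{k+1})}{Q_{k+1}},\qquad Q_{k+1}=\eta_kQ_k+1,
\]
so $P_{k+1}$ is the convex combination of $P_k$ and $f(x_{k+1})$ with nonnegative weights $\eta_kQ_k/Q_{k+1}$ and $1/Q_{k+1}$ that sum to one. Substituting the inequality $f(x_{k+1})\le P_k$ from the previous step gives
\[
P_{k+1}\;\le\;\frac{\eta_kQ_kP_k+P_k}{Q_{k+1}}\;=\;\frac{(\eta_kQ_k+1)P_k}{Q_{k+1}}\;=\;P_k.
\]
Iterating yields $P_k\le P_{k-1}\le\cdots\le P_0=f(x_0)$.

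Finally, invoking Lemma~\ref{lem3} gives $f(x_k)\le P_k\le f(x_0)$ for all $k\ge 0$, which is the claim. I do not expect any genuine obstacle here: the argument is a short induction, and the only point requiring care is making sure Lemma~\ref{lem2} is applicable at every iteration (so that the line search indeed accepts a positive step along a descent direction, guaranteeing $\theta\alpha_k g_k^Td_k\le 0$). That well-definedness is already noted in the excerpt right after Lemma~\ref{lem2}, so the proof essentially writes itself once the monotonicity of $\{P_k\}$ is recognized as the right intermediate step.
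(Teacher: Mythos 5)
Your proof is correct and follows essentially the same route as the paper: use the descent property from Lemma~\ref{lem2} and the line search \eqref{linesearch} to get $f(x_{k+1})\le P_k$, deduce $P_{k+1}\le P_k$ from the convex-combination form of \eqref{PQ}, chain down to $P_0=f(x_0)$, and finish with $f(x_k)\le P_k$ from Lemma~\ref{lem3}. The only cosmetic difference is that the paper carries the explicit nonpositive term $-\alpha_k\theta m_1\|g_k\|^2/Q_{k+1}$ before discarding it, whereas you drop it immediately.
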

\begin{proof}
By substituting $\eta_kQ_k=Q_{k+1}-1,$ in $P_{k+1}$ defined in Equation (\ref{PQ}) we have
\begin{equation*}
\begin{split}
P_{k+1} &=\frac{(Q_{k+1}-1)P_k + f(x_{k+1})}{Q_{k+1}},\\
&\leq  \frac{Q_{k+1}P_k-P_k+P_k+\alpha_k\theta m_1g_k^Td_k}{Q_{k+1}},\\
&\leq \frac{Q_{k+1}P_k-\alpha_k\theta m_1 \|g_k\|^2}{Q_{k+1}},\\
&=P_k-\frac{\alpha_k\theta m_1 \|g_k\|^2}{Q_{k+1}}\\
& \leq P_k.
\end{split}
\end{equation*}
The first two inequalities respectively come from (\ref{linesearch}) and (\ref{descentdirection}). 
Now, since $f_k \leq P_k,$ (Lemma \ref{lem3}), we obtain
\begin{equation*}
f(x_{k+1})\leq P_{k+1}\leq P_k \leq P_{k-1} \leq \cdots \leq P_0 = f(x_0),
\end{equation*}
so that $\{x_k\}\subset\mathcal{D}$ and the proof is complete.
\end{proof}
\begin{theorem}
Let $f(x)$ be defined by problem (\ref{prob1}) and suppose assumptions A1 and A2 hold. Then, the sequence of iterates $\{x_k\}$ generated by the ASDH Algorithm is contained in the level set $\mathcal{D}$ and
\begin{equation}
\liminf_{k\to \infty} \|g_k\| =0.
\end{equation}
Moreover, if $\eta_{\max}<1$ then 
\begin{equation}
\lim_{k\to \infty} \|g_k\| =0.
\end{equation}
\end{theorem}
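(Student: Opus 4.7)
The plan is to combine the descent estimates from Lemma \ref{lem2} with the recursion for $P_k$ and the Zhang--Hager nonmonotone line search machinery. The containment $\{x_k\}\subset\mathcal D$ is already provided by Lemma \ref{lem4}, so the work is in the gradient limits.

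First I would establish a uniform positive lower bound on the step length. If $\alpha_k<1$, the backtracking rule forces the previous trial $2\alpha_k$ to violate (\ref{linesearch}); combining $f(x_k)\le P_k$ with the descent lemma (from the Lipschitz estimate $\|g(x)-g(y)\|\le l\|x-y\|$ in A2) gives
\begin{equation*}
P_k + 2\alpha_k g_k^Td_k + 2l\alpha_k^2\|d_k\|^2 \;\ge\; f(x_k+2\alpha_k d_k) \;>\; P_k + 2\theta\alpha_k g_k^Td_k,
\end{equation*}
from which Lemma \ref{lem2} yields
\begin{equation*}
\alpha_k \;\ge\; \frac{(1-\theta)(-g_k^Td_k)}{l\|d_k\|^2} \;\ge\; \frac{(1-\theta)m_1}{l\,m_2^{\,2}} \;=:\; \alpha_{\min}>0.
\end{equation*}
Together with the chain of inequalities already displayed in the proof of Lemma \ref{lem4}, this yields the key one-step decrease
\begin{equation*}
P_k - P_{k+1} \;\ge\; \frac{\theta m_1 \alpha_{\min}}{Q_{k+1}}\,\|g_k\|^2.
\end{equation*}

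Next I would analyze $Q_{k+1}$ depending on $\eta_{\max}$. From $Q_{k+1}=\eta_kQ_k+1$ with $Q_0=1$ and $\eta_k\le\eta_{\max}$, a direct induction gives $Q_{k+1}\le\sum_{j=0}^{k+1}\eta_{\max}^{\,j}$. Hence if $\eta_{\max}=1$ then $Q_{k+1}\le k+2$, while if $\eta_{\max}<1$ then $Q_{k+1}\le 1/(1-\eta_{\max})$. In either case the series $\sum_k 1/Q_{k+1}$ diverges.

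For the $\liminf$ claim I would argue by contradiction: if $\|g_k\|\ge\varepsilon$ for all large $k$, summing the one-step decrease yields
\begin{equation*}
P_0 - \inf_k P_k \;\ge\; \theta m_1 \alpha_{\min}\varepsilon^2 \sum_{k}\frac{1}{Q_{k+1}} \;=\; +\infty,
\end{equation*}
which contradicts the fact that $P_k\ge f(x_k)\ge 0$ is bounded below. Finally, for the full limit when $\eta_{\max}<1$, the uniform bound $Q_{k+1}\le 1/(1-\eta_{\max})$ upgrades the decrease to $P_k-P_{k+1}\ge \theta m_1\alpha_{\min}(1-\eta_{\max})\|g_k\|^2$; telescoping gives $\sum_k\|g_k\|^2<\infty$ and hence $\|g_k\|\to 0$.

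The main obstacle I anticipate is the step-length lower bound: one must carefully use $f(x_k)\le P_k$ (from Lemma \ref{lem3}) to convert the failed Armijo test at $2\alpha_k$ into a usable inequality, since the raw line search is stated against $P_k$ rather than $f(x_k)$. Everything else is bookkeeping with the $Q_k$ recursion.
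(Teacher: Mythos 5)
Your argument is correct, and it is in substance the argument of Zhang and Hager that the paper invokes: the paper's own ``proof'' is a one-line citation to \cite{zhang2004nonmonotone}, whereas you actually carry out the specialization to this setting --- containment via Lemma \ref{lem4}, a uniform step-length bound from the failed Armijo trial at $2\alpha_k$ combined with $f(x_k)\le P_k$, the per-step decrease of $P_k$, and the dichotomy on $Q_{k+1}$ according to whether $\eta_{\max}<1$. All the key inequalities check out, including the bound $Q_{k+1}\le\sum_{j=0}^{k+1}\eta_{\max}^{\,j}$ and the resulting divergence of $\sum_k 1/Q_{k+1}$ in the $\liminf$ case versus the uniform bound $Q_{k+1}\le 1/(1-\eta_{\max})$ in the full-limit case. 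So your write-up supplies exactly the content the paper omits.

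Two small points of bookkeeping. First, your $\alpha_{\min}$ should be $\min\bigl\{1,\;(1-\theta)m_1/(l\,m_2^{\,2})\bigr\}$, since the backtracking bound is derived only when $\alpha_k<1$; when the unit step is accepted you get $\alpha_k=1$ instead. Second, the descent-lemma step applies the Lipschitz constant $l$ of $g$ along the segment from $x_k$ to the rejected trial point $x_k+2\alpha_k d_k$, which need not lie in $\mathcal{D}$; since $\|d_k\|\le m_2\|g_k\|$ is uniformly bounded on $\mathcal{D}$ and trial steps satisfy $\alpha\le 1$, these points stay in a bounded enlargement of $\mathcal{D}$ on which A2 can be assumed to hold, but this should be said explicitly (the same implicit assumption is made in the cited reference). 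Neither issue affects the validity of the conclusion.
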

\begin{proof}
The proof of this theorem follows directly from \cite{zhang2004nonmonotone}.
\end{proof}
\section{Numerical Experiments}
In this section, we turn our attention to numerical experiments to assess the performance of our proposed ASDH method compared to SDHAM method proposed in \cite{mohammad2018structured}. The SDHAM method is also a matrix-free algorithm for nonlinear least squares problems which exploits the special structure of the Hessian of the objective function; and generates its search direction using diagonal Hessian approximation similar to our proposed ASDH algorithm.

In our experiment, we solved 30 test problems of which 22 are large scale and 8 are small scale (see Table \ref{testproblem}). We vary the dimensions of the large scale problems as 1000, 5000; 10,000. All the test problems considered are properly cited. The parameters used in the experiment are as follows
\begin{itemize}
    \item ASDH algorithm: $\gamma=0.2,$ $\eta_k=0.75e^{(-(k/45)^2)}+0.1 ~\text{with}~ \eta_{\min}=0.1,$ $\eta_{\max}=0.85,$ $l=10^{-30},$ $u=10^{30};$ and $\rho=0.0001$\\
    \item SDHAM algorithm: All parameters are as presented in \cite{mohammad2018structured}.
\end{itemize}
All codes were written in MATLAB R2017a and run on a PC with intel COREi5 processor with 4GB of RAM and CPU 2.3GHZ speed. The iteration is terminated whenever the inequality $\|g_k\|\leq 10^{-4}$ is satisfied. Failure, denoted by F, is recorded when the number of iterations exceeds 1,000 and the stopping criterion mentioned above has not been satisfied.

In Tables \ref{table1}$-$\ref{table4}, we report the results of the following information: the number of iterations (NITER)
needed by each solver to converge to an approximate solution, the number of function evaluation (NFVAL), the number of matrix-vector product (NMVP) the CPU time in seconds (TIME), and the objective function $f$ value at the minimizer (FVALUE). The test problems are denoted by P$i,$ $i=1,2,\cdots,30.$

In the Figures \ref{ITER}$-$\ref{TIME}, we adopt the popular performance profile by Dolan and Mor{\'e} \cite{dolan2002benchmarking} to compare the performance of the ASDH method with that of SDHAM method based on the number of iterations, number of functions evaluation, number of matrix-vector product and CPU time. Though the two methods are competitive, it can be seen from the Figures \ref{ITER}$-$\ref{TIME} that all the curves with respect to our proposed ASDH method stay longer on the vertical axis which means it solves more problems with less NITER, NFVAL, NMVP and TIME compared to the SDHAM method. Specifically, it can observed from Figures \ref{ITER}$-$\ref{GVAL} that our method solves about $80\%$ of the test problems with least NITER, NFVAL, NMVP. Also, from Figure \ref{TIME} we can see that our method solves about $70\%$ of the test problems with least CPU TIME.\\
Moreover, from the information reported in Table \ref{table1}$-$\ref{table4}, it can be seen that ASDH method solves all the test problems without any failure while the SDHAM recorded 3 failures. In the overall experiments, ASDH method needs less number of iterations, number of functions evaluation, number of matrix-vector product and CPU time to obtain the minimizer of most of the test problems compared to the SDHAM methods.
\begin{table}[htbp]
  \centering
  \caption{List of test problems with references and starting points}
    \begin{tabular}{cll}
    \hline
    \multicolumn{1}{l}{Problems} & Function name & Starting point \bigstrut\\
    \hline
    \multicolumn{3}{l}{Large scale} \bigstrut\\
    \hline
    P1    & Penalty function I \cite{la2004spectral}     & $(1/3,1/3,\cdots,1/3)^T$ \bigstrut[t]\\
    P2    & Trigonometric function \cite{more1978testing}     & $(1/n,\cdots,1/n)^T$ \\
    P3    & Discrete boundary value \cite{more1978testing}     & $(\frac{1}{n+1}(\frac{1}{n+1}-1),\cdots,\frac{1}{n+1}(\frac{n}{n+1}-1))^T$ \\
    P4    & Linear function full rank \cite{more1978testing}    & $(1,1,\cdots,1)^T$ \\
    P5    & Problem 202 \cite{lukvsan2003test}    & $(2,2,\cdots,2)^T$ \\
    P6    & Problem 206 \cite{lukvsan2003test}    & $(1/n,\cdots,1/n)^T$ \\
    P7    & Problem 212 \cite{lukvsan2003test}     & $(0.5,\cdots,0.5)^T$ \\
    P8    & Strictly convex function I \cite{raydan1997barzilai}     & $(1/n,2/n,\cdots,1)^T$ \\
    P9    &  Strictly convex function II \cite{raydan1997barzilai}    & $(1,1,\cdots,1)^T$ \\
    P10   & Brown almost linear \cite{more1978testing}    & $(0.5,\cdots,0.5)^T$ \\
    P11   & Exponential function I \cite{la2004spectral}    & $(\frac{n}{n-1},\cdots,\frac{n}{n-1})^T$ \\
    P12   & Singular function \cite{la2004spectral}    & $(1,1,\cdots,1)^T$ \\
    P13   & Logarithmic function \cite{la2004spectral}    & $(1,1,\cdots,1)^T$ \\
    P14   & Extended Freudenstein and Roth \cite{la2004spectral}   & $(6,3,6,3,\cdots,6,3)^T$ \\
    P15   & Extended Powell singular \cite{la2004spectral}    & $(1.5E-4,\cdots,1.5E-4)^T$ \\
    P16   & Function21 \cite{la2004spectral}    & $(-1,-1,\cdots,-1)^T$ \\
    P17   & Broyden tridiagonal function \cite{more1978testing}    & $(-1,-1,\cdots,-1)^T$ \\
    P18   & Generalized Broyden tridiagonal \cite{lukvsan2003test}    & $(-1,-1,\cdots,-1)^T$ \\
    P19   & Extended Rosenbrock \cite{more1978testing}    & $(-1,1,-1,1,\cdots,-1,1)^T$ \\
    P20   & Extended Himmelblau \cite{momin2013literature}    & $(1,1/n,1,1/n,\cdots,1,1/n)^T$ \\
    P21   & Function 27 \cite{la2004spectral}    & $(100,1/n^2,1/n^2,\cdots,1/n^2)^T$ \\
    P22   & Trigonometric logarithmic function \cite{mohammad2018structured}    & $(1,1,\cdots,1)^T$ \bigstrut[b]\\
    \hline
    \multicolumn{1}{l}{Small scale} &       &  \bigstrut\\
    \hline
    P23   & Bard function \cite{more1978testing}    & $(-1000,-1000,-1000)^T$ \bigstrut[t]\\
    P24   & Brown badly scaled \cite{more1978testing}    & $(1,1)^T$ \\
    P25   & Jennrich and Sampson \cite{more1978testing}    & $(0.2,0.2)^T$ \\
    P26   & Box 3D function \cite{more1978testing}    & $(0,0.1)^T$ \\
    P27   & Rank deficient Jacobian \cite{gonccalves2016local}    & $(-1,1)^T$ \\
    P28   & Rosenbrock function \cite{more1978testing}    & $(-1,1)^T$ \\
    P29   & Parameterized problem \cite{huschens1994use}    & $(10,10)^T$ \\
    P30   & Freudenstein and Roth function \cite{more1978testing}   & $(0.5,-2)^T$ \bigstrut[b]\\
    \hline
    \end{tabular}%
  \label{testproblem}%
\end{table}%

\begin{table}[htbp]
  \centering
  \caption{Numerical results of our ASDH and SDHAM methods for large scale problems $1-22$ with dimension $n=m=1,000$}
  \scalebox{0.8}
  {
    \begin{tabular}{ccccccccccc}
    \toprule
     & \multicolumn{5}{c}{ASDH}              & \multicolumn{5}{c}{SDHAM} \\
    \hline
    Problem & NITER  & NFEVAL & NMVP & TIME  & FVALUE & ITER  & NFEVAL & NMVP & TIME  & FVALUE \\
    \hline
    P1    & 6     & 7     & 19    & 0.022139 & 2.36E-06 & 5     & 6     & 16    & 0.059823 & 1.23E-05 \\
    P2    & 193   & 354   & 580   & 0.26363 & 1.34E-07 & 5     & 12    & 16    & 0.011127 & 1.89E-15 \\
    P3    & 18    & 20    & 55    & 0.063171 & 2.52E-08 & 20    & 23    & 61    & 0.074233 & 2.33E-08 \\
    P4    & 2     & 3     & 7     & 0.02106 & 0.502 & 2     & 3     & 7     & 0.020876 & 0.502 \\
    P5    & 5     & 6     & 16    & 0.013713 & 7.22E-13 & 5     & 6     & 16    & 0.017462 & 1.39E-10 \\
    P6    & 43    & 49    & 130   & 0.099136 & 3E-08 & 44    & 57    & 133   & 0.03711 & 2.96E-08 \\
    P7    & 5     & 6     & 16    & 0.012173 & 9.06E-12 & 7     & 8     & 22    & 0.015851 & 3.32E-12 \\
    P8   & 5     & 6     & 16    & 0.007124 & 500   & 5     & 6     & 16    & 0.01575 & 500 \\
    P9   & 8     & 23    & 25    & 0.039868 & 1669168 & 8     & 23    & 25    & 0.012976 & 1669168 \\
    P10   & 2     & 23    & 7     & 0.005995 & 1.99E-13 & F      & F      & F      &F       & F \\
    P11   & 4     & 5     & 13    & 0.003988 & 9.44E-08 & 5     & 6     & 16    & 0.008688 & 4.53E-08 \\
    P12   & 22    & 41    & 67    & 0.061689 & 55.0298 & 25    & 44    & 76    & 0.081369 & 55.0298 \\
    P13   & 6     & 7     & 19    & 0.025038 & 8.45E-17 & 6     & 7     & 19    & 0.012527 & 1.57E-12 \\
    P14   & 19    & 27    & 58    & 0.038295 & 8.12E-12 & 21    & 30    & 64    & 0.018482 & 6.24E-14 \\
    P15   & 1     & 7     & 4     & 0.018509 & 1.21E-12 & 1     & 7     & 4     & 0.009089 & 1.21E-12 \\
    P16   & 85    & 116   & 256   & 0.13824 & 6.2E-10 & 75    & 101   & 226   & 0.21378 & 2.51E-10 \\
    P17   & 247   & 507   & 742   & 0.26142 & 0.35626 & 19    & 34    & 58    & 0.10981 & 1.63E-11 \\
    P18   & 72    & 125   & 217   & 0.1084 & 0.76757 & 103   & 148   & 310   & 0.13672 & 0.026797 \\
    P19   & 1     & 2     & 4     & 0.009725 & 0     & 1     & 2     & 4     & 0.017676 & 0 \\
    P20   & 17    & 22    & 52    & 0.018535 & 7.54E-13 & 30    & 34    & 91    & 0.088546 & 5.78E-11 \\
    P21   & 17    & 32    & 52    & 0.022589 & 6.37E-07 & 18    & 33    & 55    & 0.064815 & 2.16E-07 \\
    P22   & 6     & 7     & 19    & 0.02308 & 8.16E-17 & 6     & 9     & 19    & 0.29798 & 1.03E-14 \\
    \bottomrule
    \end{tabular}%
    }
  \label{table1}%
\end{table}%

\begin{table}[htbp]
  \centering
  \caption{Numerical results of our ASDH and SDHAM methods for large scale problems $1-22$ with dimension $n=m=5,000$}
  \scalebox{0.8}
  {
    \begin{tabular}{ccccccccccc}
    \hline
     & \multicolumn{5}{c}{ASDH}              & \multicolumn{5}{c}{SDHAM} \bigstrut\\
    \hline
    Problem & NITER  & NFEVAL & NMVP & TIME  & FVALUE & ITER  & NFEVAL & NMVP & TIME  & FVALUE \bigstrut\\
    \hline
    P1    & 15    & 16    & 46    & 0.066157 & 7.93E-05 & 15    & 16    & 46    & 0.17645 & 7.92E-05 \bigstrut[t]\\
    P2    & 120   & 273   & 361   & 0.76396 & 7.91E-08 & F      & F      &  F     & F      & F \\
    P3    & 3     & 5     & 10    & 0.044907 & 1.44E-09 & 3     & 5     & 10    & 0.16634 & 1.44E-09 \\
    P4    & 2     & 3     & 7     & 0.027567 & 0.5004 & 2     & 3     & 7     & 0.09278 & 0.5004 \\
    P5    & 5     & 6     & 16    & 0.029966 & 7.23E-13 & 5     & 6     & 16    & 0.052194 & 6.94E-10 \\
    P6    & 7     & 9     & 22    & 0.060726 & 2.32E-09 & 6     & 8     & 19    & 0.11743 & 2.83E-09 \\
    P7    & 5     & 6     & 16    & 0.018999 & 9.11E-12 & 7     & 8     & 22    & 0.1734 & 3.32E-12 \\
    P8   & 5     & 6     & 16    & 0.075254 & 2500  & 5     & 6     & 16    & 0.047758 & 2500 \\
    P9   & 8     & 27    & 25    & 0.071887 & 2.08E+08 & 9     & 28    & 28    & 0.13815 & 2.08E+08 \\
    P10   & 3     & 29    & 10    & 0.027006 & 5.17E-16 & 2     & 28    & 7     & 0.072295 & NaN \\
    P11   & 3     & 4     & 10    & 0.047036 & 9.54E-08 & 4     & 5     & 13    & 0.046785 & 3.62E-08 \\
    P12   & 27    & 50    & 82    & 0.25832 & 1386.256 & 29    & 52    & 88    & 0.26293 & 1386.255 \\
    P13   & 6     & 7     & 19    & 0.040228 & 3.93E-16 & 6     & 7     & 19    & 0.062598 & 7.6E-12 \\
    P14   & 21    & 29    & 64    & 0.062354 & 1.41E-13 & 21    & 30    & 64    & 0.1584 & 3.12E-13 \\
    P15   & 1     & 7     & 4     & 0.024886 & 6.03E-12 & 1     & 7     & 4     & 0.12042 & 6.03E-12 \\
    P16   & 69    & 96    & 208   & 0.36005 & 5.97E-10 & 83    & 113   & 250   & 0.62034 & 4.57E-10 \\
    P17   & 140   & 305   & 421   & 0.32061 & 2.26E-10 & 19    & 34    & 58    & 0.20367 & 1.63E-11 \\
    P18   & 58    & 98    & 175   & 0.21311 & 2.26E-10 & 97    & 132   & 292   & 0.3257 & 0.031054 \\
    P19   & 1     & 2     & 4     & 0.005155 & 0     & 1     & 2     & 4     & 0.088945 & 0 \\
    P20   & 40    & 50    & 121   & 0.21105 & 1.15E-10 & 30    & 35    & 91    & 0.1997 & 3.18E-11 \\
    P21   & 17    & 32    & 52    & 0.10225 & 6.37E-07 & 18    & 33    & 55    & 0.10118 & 2.16E-07 \\
    P22   & 6     & 7     & 19    & 0.048698 & 3.9E-16 & 6     & 9     & 19    & 0.56375 & 3.33E-12 \bigstrut[b]\\
    \hline
    \end{tabular}%
    }
  \label{table2}%
\end{table}%

\begin{table}[htbp]
  \centering
  \caption{Numerical results of our ASDH and SDHAM methods for large scale problems $1-22$ with dimension $n=m=10,000$}
  \scalebox{0.8}
  {
    \begin{tabular}{ccccccccccc}
    \hline
     & \multicolumn{5}{c}{ASDH}              & \multicolumn{5}{c}{SDHAM} \bigstrut\\
    \hline
    Problem & NITER  & NFEVAL & NMVP & TIME  & FVALUE & ITER  & NFEVAL & NMVP & TIME  & FVALUE \bigstrut\\
    \hline
    P1    & 23    & 24    & 70    & 0.49096 & 0.000142 & 23    & 24    & 70    & 0.13661 & 0.000142 \bigstrut[t]\\
    P2    & 81    & 192   & 244   & 0.67185 & 7.36E-08 &F       & F      & F      & F      & F \\
    P3    & 2     & 4     & 7     & 0.095771 & 1.58E-09 & 2     & 4     & 7     & 0.044607 & 1.58E-09 \\
    P4    & 2     & 3     & 7     & 0.055015 & 0.5002 & 2     & 3     & 7     & 0.030105 & 0.5002 \\
    P5    & 5     & 6     & 16    & 0.06105 & 7.25E-13 & 5     & 6     & 16    & 0.040306 & 1.39E-09 \\
    P6    & 3     & 5     & 10    & 0.020898 & 7.16E-10 & 3     & 5     & 10    & 0.025454 & 7.16E-10 \\
    P7    & 5     & 6     & 16    & 0.040219 & 9.17E-12 & 7     & 8     & 22    & 0.056896 & 3.32E-12 \\
    P8   & 5     & 6     & 16    & 0.1955 & 5000  & 5     & 6     & 16    & 0.051594 & 5000 \\
    P9   & 9     & 30    & 28    & 0.099793 & 1.67E+09 & 10    & 31    & 31    & 0.15435 & 1.67E+09 \\
    P10   & 3     & 4     & 10    & 0.019368 & 4.77E-08 & 3     & 4     & 10    & 0.021593 & 7.24E-08 \\
    P11   & 199   & 3849  & 598   & 4.6179 & 2.49E-15 & 2     & 19    & 7     & 0.025921 & 1.13E-13 \\
    P12   & 28    & 53    & 85    & 0.27666 & 5550.287 & 30    & 55    & 91    & 0.51292 & 5550.286 \\
    P13   & 6     & 7     & 19    & 0.066932 & 7.79E-16 & 6     & 7     & 19    & 0.03482 & 1.51E-11 \\
    P14   & 21    & 29    & 64    & 0.11086 & 2.83E-13 & 21    & 30    & 64    & 0.16018 & 6.24E-13 \\
    P15   & 1     & 7     & 4     & 0.011959 & 1.21E-11 & 1     & 7     & 4     & 0.014576 & 1.21E-11 \\
    P16   & 89    & 122   & 268   & 0.90064 & 5.94E-10 & 86    & 112   & 259   & 0.84402 & 8.47E-10 \\
    P17   & 286   & 620   & 859   & 1.5625 & 0.35626 & 19    & 34    & 58    & 0.20675 & 1.63E-11 \\
    P18   & 56    & 80    & 169   & 0.28569 & 3.2E-10 & 238   & 362   & 715   & 1.3739 & 0.010024 \\
    P19   & 1     & 2     & 4     & 0.016878 & 0     & 1     & 2     & 4     & 0.018502 & 0 \\
    P20   & 24    & 29    & 73    & 0.14056 & 6.17E-11 & 26    & 31    & 79    & 0.23926 & 4.71E-11 \\
    P21   & 17    & 32    & 52    & 0.11498 & 6.37E-07 & 18    & 33    & 55    & 0.17027 & 2.16E-07 \\
    P22   & 6     & 7     & 19    & 0.031825 & 7.77E-16 & 6     & 9     & 19    & 0.12546 & 8.61E-12 \bigstrut[b]\\
    \hline
    \end{tabular}%
    }
  \label{table3}%
\end{table}%

\begin{table}[htbp]
  \centering
  \caption{Numerical results of our ASDH and SDHAM methods for small scale problems $23-30$}
  \scalebox{0.8}
  {
    \begin{tabular}{clcccccccccc}
    \hline
          &       & \multicolumn{5}{c}{ASDH}              & \multicolumn{5}{c}{SDHAM} \bigstrut\\
    \hline
    Problem & \multicolumn{1}{c}{DIM} & ITER  & NFEVAL & NMVP & TIME  & FVALUE & ITER  & NFEVAL & NMVP & TIME  & FVALUE \bigstrut\\
    \hline
    P23   & n=3, m=15 & 1     & 4     & 4     & 0.026898 & 8.7448 & 1     & 4     & 4     & 0.16203 & 8.7448 \bigstrut[t]\\
    P24   & n=2, m=3 & 9     & 12    & 28    & 0.010317 & 7.16E-14 & 10    & 12    & 31    & 0.060814 & 1.05E-27 \\
    P25   & n=2, m=20 & 4     & 20    & 13    & 0.00827 & 62.1811 & 5     & 21    & 16    & 0.046634 & 62.1811 \\
    P26   & n=3, m=10 & 47    & 64    & 142   & 0.063188 & 4.06E-09 & 38    & 50    & 115   & 0.12071 & 1.83E-10 \\
    P27   & n=2, m=3 & 6     & 9     & 19    & 0.008934 & 1.2905 & 6     & 9     & 19    & 0.077812 & 1.2905 \\
    P28   & n=m=2 & 1     & 2     & 4     & 0.005085 & 0     & 1     & 2     & 4     & 0.040352 & 0 \\
    P29   & n=2, m=3 & 6     & 22    & 19    & 0.018706 & 0.49999 & 6     & 22    & 19    & 0.039052 & 0.49999 \\
    P30   & n=m=2 & 81    & 113   & 244   & 0.036083 & 24.4921 & 93    & 133   & 280   & 0.10623 & 24.4921 \bigstrut[b]\\
    \hline
    \end{tabular}%
    }
  \label{table4}%
\end{table}%
\newpage
\begin{figure}[h]
\begin{center}
\includegraphics[angle=0,width=20cm]{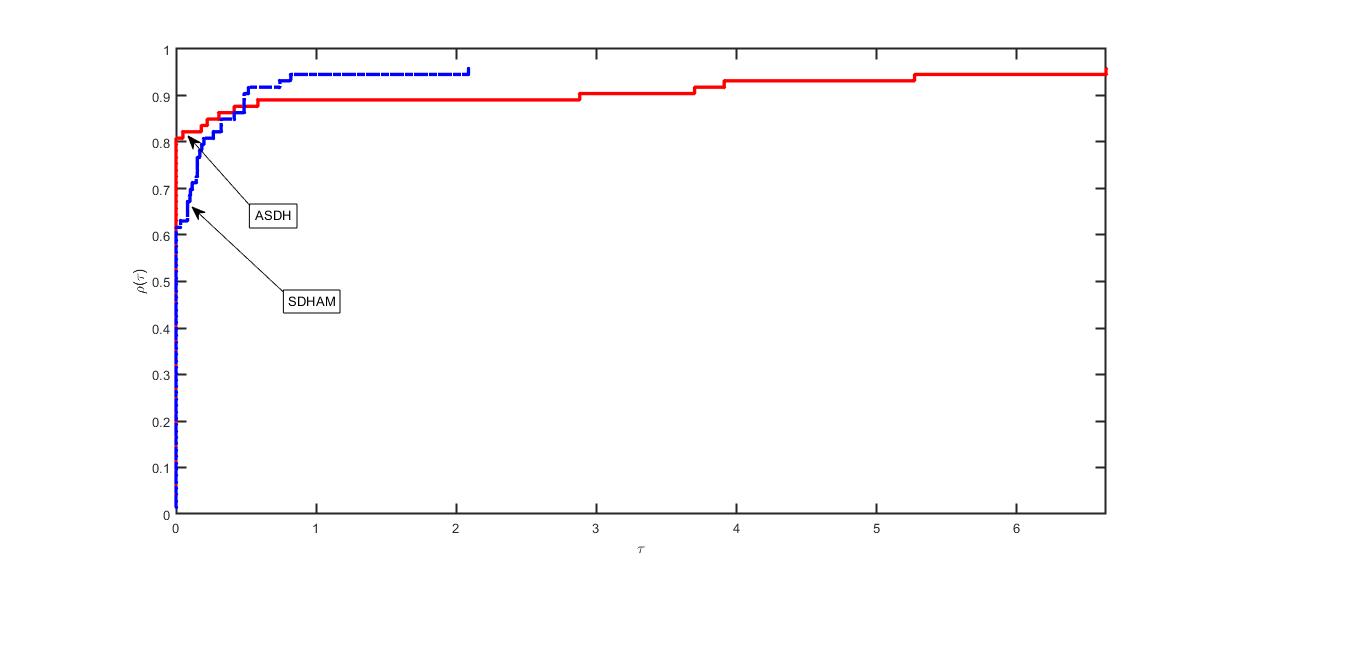}
\end{center}
\vspace{-2cm}{\textbf{\caption{Performance profile with respect to number of iterations }\label{ITER}}}
\end{figure}
\begin{figure}[h]
\begin{center}
\includegraphics[angle=0,width=20cm]{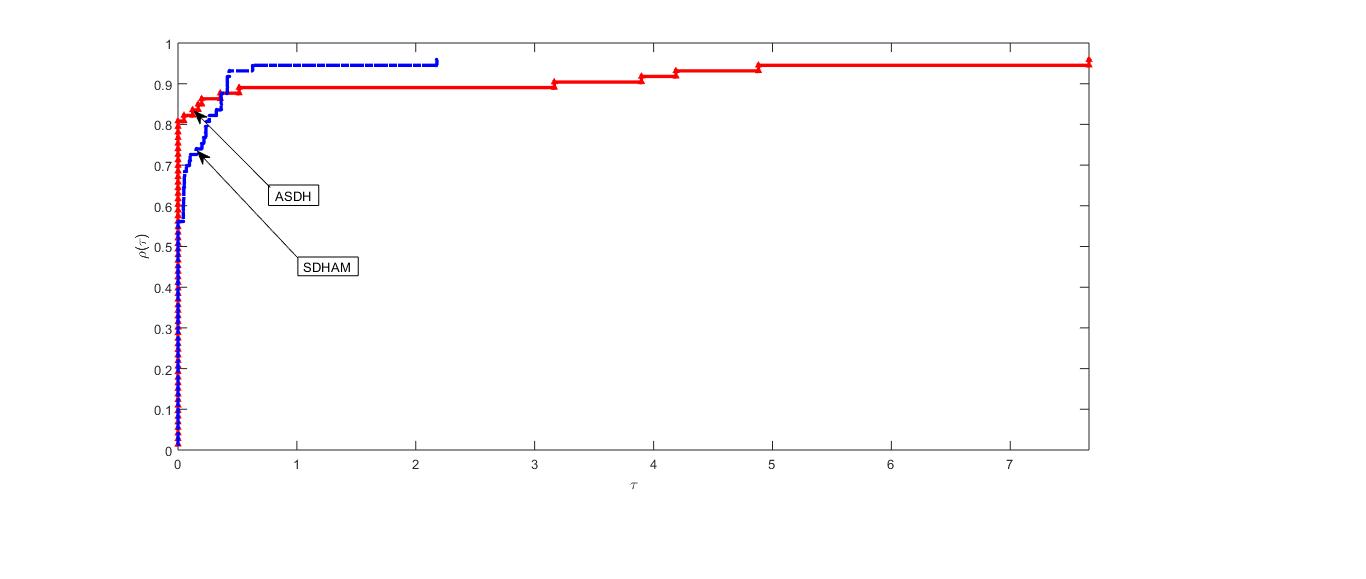}
\end{center}
\vspace{-2cm}{\textbf{\caption{Performance profile with respect to number of function evaluations }\label{FVAL}}}
\end{figure}
\begin{figure}[h]
\begin{center}
\includegraphics[angle=0,width=20cm]{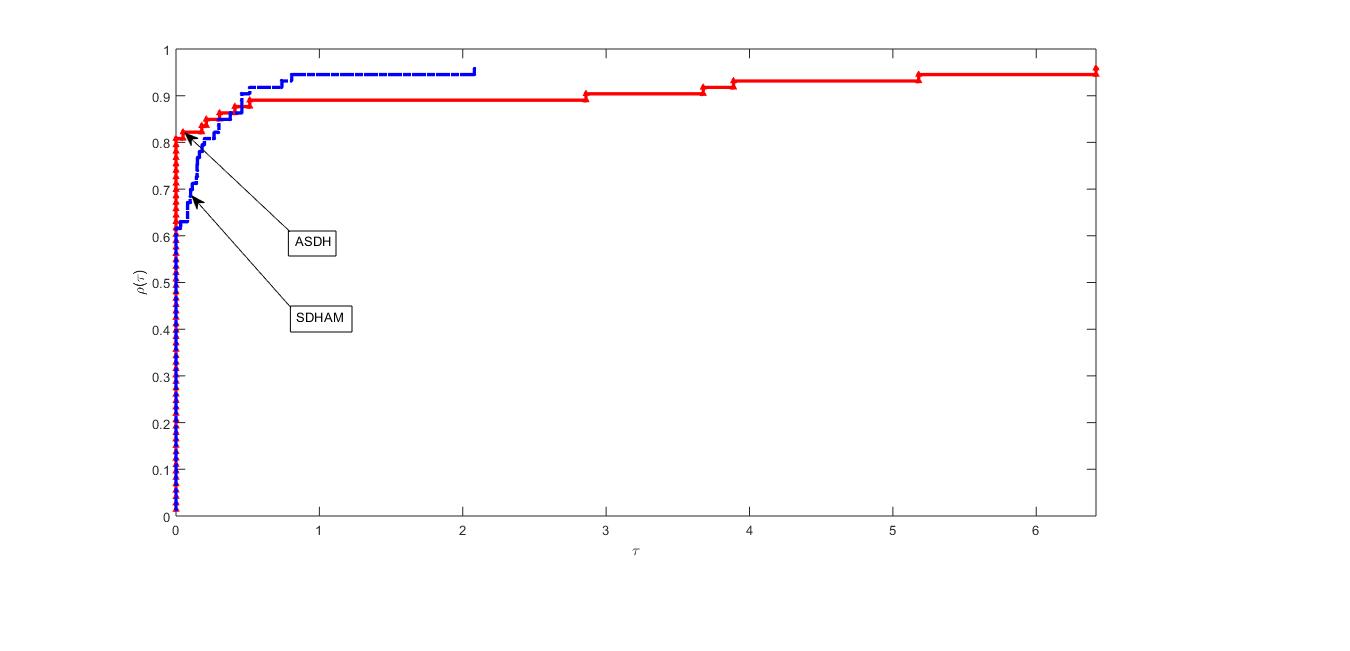}
\end{center}
\vspace{-2cm}{\textbf{\caption{Performance profile with respect to number of matrix-vector product}\label{GVAL}}}
\end{figure}
\begin{figure}[h]
\begin{center}
\includegraphics[angle=0,width=20cm]{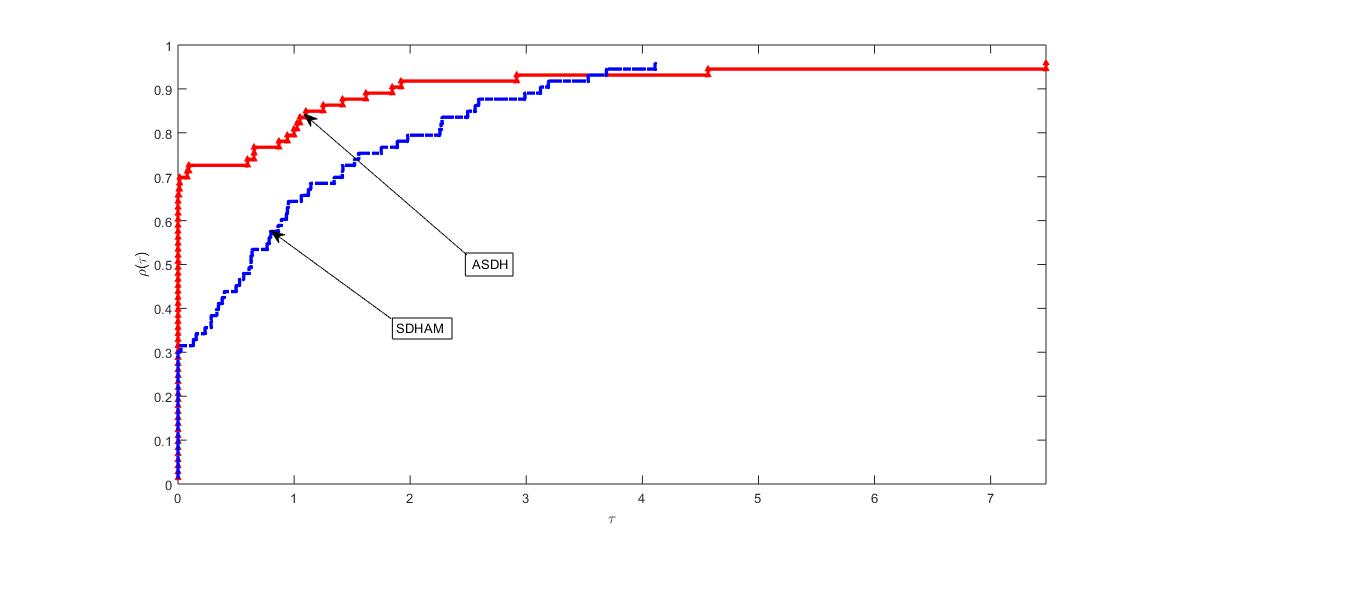}
\end{center}
\vspace{-2cm}{\textbf{\caption{Performance profile with respect to CPU time }\label{TIME}}}
\end{figure}
\newpage
\section{Conclusions}
We have proposed an iterative algorithm with structured diagonal Hessian approximation for solving nonlinear least square problems. The proposed algorithm neither forms nor stores matrices which make it suitable for large scale problems. We have devised appropriate safeguards to ensure the search directions generated by our proposed algorithm are descent. The proposed ASDH method was developed using a diagonal Hessian approximation that contains more information of the objective function than the one proposed by Mohammad and Santos \cite{mohammad2018structured}. We presented some preliminary numerical experiments to show the robustness of our ASDH method.\\\\
\vskip-0.5cm \noindent{\bf Acknowledgements:} 
The first and second authors acknowledge the financial support provided by King Mongkut's University of Technology Thonburi through the "KMUTT 55th Anniversary Commemorative Fund". The first author was supported by the Petchra Pra Jom Klao Doctoral Scholarship Academic for Ph.D. Program at KMUTT. This project was supported by the Theoretical and Computational Science (TaCS) Center under Computational and Applied Science for Smart Innovation Research Cluster (CLASSIC), Faculty of Science, KMUTT.
\newpage
\bibliographystyle{plain}
\bibliography{nlspaper1}
%
%
%
%
%
%
%

\end{document}